\numberwithin{equation}{section}
\theoremstyle{plain}
\newtheorem{theorem}{Theorem}
\newtheorem{assumption}{Assumption}
\newtheorem{lemma}{Lemma}
\newtheorem{remark}{Remark}
\begin{document}

\title[CLT for Nonlinear Hawkes Processes]{Central Limit Theorem for Nonlinear Hawkes Processes}
\author{LINGJIONG ZHU}
\address
{Courant Institute of Mathematical Sciences\newline
\indent New York University\newline
\indent 251 Mercer Street\newline
\indent New York, NY-10012\newline
\indent United States of America}
\email{ling@cims.nyu.edu}
\date{4 April 2012. \textit{Revised:} 23 November 2012}
\subjclass[2000]{60G55, 60F05.}
\keywords{Central limit theorem, functional central limit theorem, point processes, Hawkes processes, self-exciting processes.}

\thanks{This research was supported partially by a grant from the National Science Foundation: DMS-0904701, DARPA grant and MacCracken Fellowship 
from New York University.}

\begin{abstract}
Hawkes process is a self-exciting point process with clustering effect whose intensity depends on its entire past history. It has wide applications
in neuroscience, finance and many other fields. In this paper, we obtain a functional central limit theorem for nonlinear Hawkes process.
Under the same assumptions, we also obtain a Strassen's invariance principle, i.e. a functional law of the iterated logarithm.
\end{abstract}

\maketitle

\section{Introduction and Main Results}

\subsection{Introduction}

Hawkes process is a self-exciting simple point process first introduced by Hawkes \cite{Hawkes}. The future
evolution of a self-exciting point process is influenced by the timing of past events. The process is non-Markovian except
for some very special cases. In other words, Hawkes process depends on the entire past history and has a long memory. 
Hawkes process has wide applications in neuroscience, seismology, genome analysis, finance and many other fields. 
It has both self-exciting and clustering properties, which is very appealing to some financial applications. 
According to Errais et al. \cite{Errais}, ``The collapse of Lehman Brothers brought the financial system to the brink of a breakdown.
The dramatic repercussions point to the exisence of feedback phenomena that are channeled through the complex web of informational and contractual
relationships in the economy... This and related episodes motivate the design of models of correlated default timing that incorporate
the feedback phenomena that plague credit markets.'' The self-exciting and clustering properties of Hawkes process make it a viable candidate in modeling
the correlated defaults and evaluating the credit derivatives in finance, for example, see 
Errais et al. \cite{Errais} and Dassios and Zhao \cite{Dassios}. 

Most of the literature of Hawkes processes studies only the linear case, which has an immigration-birth representation (see Hawkes and Oakes \cite{HawkesII}).
The stability, law of large numbers, central limit theorem, large deviations, Bartlett spectrum etc. have all been studied and 
well understood.
Almost all of the applications of Hawkes process in the literatures consider exclusively the linear case. 
Because of the lack of immigration-birth representation and computational tractability, nonlinear Hawkes process is much less studied. 
However, some efforts have already been made in this direction. For instance, see Br\'{e}maud and Massouli\'{e} \cite{Bremaud}, 
Zhu \cite{ZhuI} and Zhu \cite{ZhuII}. 
In this paper, we will prove a functional central limit theorem for nonlinear Hawkes process. Hopefully, in the future, nonlinear Hawkes processes
will also be used in the applications in various fields.

For a list of references on the theories and applications of Hawkes process, we refer to Daley and Vere-Jones \cite{Daley} and Liniger \cite{Liniger}.

\subsection{Nonlinear Hawkes Processes}

Let $N$ be a simple point process on $\mathbb{R}$ and let $\mathcal{F}^{-\infty}_{t}:=\sigma(N(C),C\in\mathcal{B}(\mathbb{R}), C\subset(-\infty,t])$ be
an increasing family of $\sigma$-algebras. Any nonnegative $\mathcal{F}^{-\infty}_{t}$-progressively measurable process $\lambda_{t}$ with
\begin{equation}
\mathbb{E}\left[N(a,b]|\mathcal{F}^{-\infty}_{a}\right]=\mathbb{E}\left[\int_{a}^{b}\lambda_{s}ds\big|\mathcal{F}^{-\infty}_{a}\right]
\end{equation}
a.s. for all intervals $(a,b]$ is called an $\mathcal{F}^{-\infty}_{t}$-intensity of $N$. We use the notation $N_{t}:=N(0,t]$ to denote the number of
points in the interval $(0,t]$. 

A general Hawkes process is a simple point process $N$ admitting an $\mathcal{F}^{-\infty}_{t}$-intensity
\begin{equation}
\lambda_{t}:=\lambda\left(\int_{-\infty}^{t}h(t-s)N(ds)\right),\label{dynamics}
\end{equation}
where $\lambda(\cdot):\mathbb{R}^{+}\rightarrow\mathbb{R}^{+}$ is locally integrable, left continuous, 
$h(\cdot):\mathbb{R}^{+}\rightarrow\mathbb{R}^{+}$ and
we always assume that $\Vert h\Vert_{L^{1}}=\int_{0}^{\infty}h(t)dt<\infty$. 
In \eqref{dynamics}, $\int_{-\infty}^{t}h(t-s)N(ds)$ stands for $\int_{(-\infty,t)}h(t-s)N(ds)=\sum_{\tau<t}h(t-\tau)$, where
$\tau$ are the occurences of the points before time $t$.

In the literature, $h(\cdot)$ and $\lambda(\cdot)$ are usually referred to
as exciting function and rate function respectively.

A Hawkes process is linear if $\lambda(\cdot)$ is linear and it is nonlinear otherwise.

Br\'{e}maud and Massouli\'{e} \cite{Bremaud} proved that under the assumption
that $\lambda(\cdot)$ is $\alpha$-Lipschitz with $\alpha\Vert h\Vert_{L^{1}}<1$, 
there exists a unique stationary and ergodic version of Hawkes process satisfying the dynamics \eqref{dynamics}.

Br\'{e}maud and Massouli\'{e} \cite{Bremaud} studied the stability of nonlinear Hawkes process in great details, including existence, uniqueness,
stability in distribution and in variation etc.

Later, Br\'{e}maud et al. \cite{BremaudII} studied the rate of convergence of nonlinar Hawkes process to its stationary version.

\subsection{Limit Theorems for Hawkes Processes}

When $\lambda(\cdot)$ is linear, say $\lambda(z)=\nu+z$, for some $\nu>0$ and $\Vert h\Vert_{L^{1}}<1$, 
Hawkes process has a very nice immigration-birth representation, see
for example Hawkes and Oakes \cite{HawkesII}. For the linear Hawkes process, limit theorems are very well understood. There is the law of large numbers 
(see for instance Daley and Vere-Jones \cite{Daley}), i.e.
\begin{equation}
\frac{N_{t}}{t}\rightarrow\frac{\nu}{1-\Vert h\Vert_{L^{1}}},\quad\text{as $t\rightarrow\infty$ a.s.}
\end{equation}
Moreover, Bordenave and Torrisi \cite{Bordenave} proved a large deviation principle for $(\frac{N_{t}}{t}\in\cdot)$ with the rate function
\begin{equation}
I(x)=
\begin{cases}
x\log\left(\frac{x}{\nu+x\Vert h\Vert_{L^{1}}}\right)-x+x\Vert h\Vert_{L^{1}}+\nu &\text{if $x\in[0,\infty)$}
\\
+\infty &\text{otherwise}
\end{cases}.
\end{equation}
Recently, Bacry et al. \cite{Bacry} proved a functional central limit theorem for the linear multivariate Hawkes process under certain assumptions.
That includes the linear Hawkes process as a special case and they proved that
\begin{equation}
\frac{N_{\cdot t}-\cdot\mu t}{\sqrt{t}}\rightarrow\sigma B(\cdot),\quad\text{as $t\rightarrow\infty$,}
\end{equation}
where $B(\cdot)$ is a standard Brownian motion. The convergence
is weak convergence on $D[0,1]$, the space of c\'{a}dl\'{a}g functions on $[0,1]$, equipped with Skorokhod topology.
Here, 
\begin{equation}
\mu=\frac{\nu}{1-\Vert h\Vert_{L^{1}}}\quad\text{and}\quad\sigma^{2}=\frac{\nu}{(1-\Vert h\Vert_{L^{1}})^{3}}.
\end{equation}
Very recently, Karabash and Zhu \cite{Karabash} obtained central limit theorem and large deviation principle
for the linear Hawkes process with random marks.
In a nutshell, the linear Hawkes process satisfies very nice limit theroems and the limits can be computed more or less explicitly.

On the contrary, when $\lambda(\cdot)$ is nonlinear, the usual immigration-birth representation no longer works and you may have to use some
abstract theory to obtain limit theorems. Some progress has already been made for nonlinear Hawkes process. 

Br\'{e}maud and Massouli\'{e} \cite{Bremaud}'s stability result implies that by the erogdic theorem,
\begin{equation}
\frac{N_{t}}{t}\rightarrow\mu:=\mathbb{E}[N[0,1]],
\end{equation}
as $t\rightarrow\infty$, where $\mathbb{E}[N[0,1]]$ is the mean of $N[0,1]$ under the stationary and ergodic measure. 

When $h(\cdot)$ is exponential (and $\lambda(\cdot)$ is nonlinear), the Hawkes process is Markovian 
and Zhu \cite{ZhuI} obtained a large deviation principle for $(N_{t}/t\in\cdot)$
in this case. Zhu \cite{ZhuI} also proved the large deviation principle for the case when $h(\cdot)$ is a sum of exponentials and used that as an approximation
to recover the result for the linear case proved in Bordenave and Torrisi \cite{Bordenave}.

For the most general $h(\cdot)$ and $\lambda(\cdot)$, Zhu \cite{ZhuII} proved a process-level, i.e. level-3 large deviation principle for the Hawkes process
and used contraction principle to obtain a large deviation principle for $(N_{t}/t\in\cdot)$.

In this paper, we will prove a functional central limit theorem 
and a functional law of the iterated logarithm for nonlinear Hawkes process.

\subsection{Main Results}

The following is the assumption we will use throughout this paper.

\begin{assumption}\label{Assumption}
We assume that
\begin{itemize}
\item
$h(\cdot):[0,\infty)\rightarrow\mathbb{R}^{+}$ is a decreasing function and $\int_{0}^{\infty}th(t)dt<\infty$.

\item
$\lambda(\cdot)$ is positive and increasing and $\alpha$-Lipschitz (i.e. $|\lambda(x)-\lambda(y)|\leq\alpha|x-y|$ for any $x,y$) 
such that $\alpha\Vert h\Vert_{L^{1}}<1$.
\end{itemize}
\end{assumption}
Br\'{e}maud and Massouli\'{e} \cite{Bremaud} proved that if $\lambda(\cdot)$ is $\alpha$-Lipschitz
with $\alpha\Vert h\Vert_{L^{1}}<1$, 
there exists a unique stationary and ergodic Hawkes process satisfying the dynamics \eqref{dynamics}.
Hence, under our Assumption \ref{Assumption} (which is slightly stronger than \cite{Bremaud}), there exists a unique
stationary and ergodic Hawkes process satisfying the dynamics \eqref{dynamics}.

Let $\mathbb{P}$ and $\mathbb{E}$ denote the probability measure and expectation for a stationary, ergodic Hawkes process, 
and let $\mathbb{P}(\cdot|\mathcal{F}^{-\infty}_{0})$
and $\mathbb{E}(\cdot|\mathcal{F}^{-\infty}_{0})$ denote the conditional probability measure and conditional expectation 
for the Hawkes process given the past history.

The following are the main results of this paper.

\begin{theorem}\label{mainthm}
Under Assumption \ref{Assumption}, let $N$ be the stationary and ergodic nonlinear Hawkes process with dynamics \eqref{dynamics}. We have
\begin{equation}
\frac{N_{\cdot t}-\cdot\mu t}{\sqrt{t}}\rightarrow\sigma B(\cdot),\quad\text{as $t\rightarrow\infty$,}\label{convergence}
\end{equation}
where $B(\cdot)$ is a standard Brownian motion and $0<\sigma<\infty$, where
\begin{equation}
\sigma^{2}:=\mathbb{E}[(N[0,1]-\mu)^{2}]+2\sum_{j=1}^{\infty}\mathbb{E}[(N[0,1]-\mu)(N[j,j+1]-\mu)].\label{sigmasquare}
\end{equation}
The convergence in \eqref{convergence}
is weak convergence on $D[0,1]$, the space of c\'{a}dl\'{a}g functions on $[0,1]$, equipped with Skorokhod topology.
\end{theorem}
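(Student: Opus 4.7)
The plan is to reduce the continuous-time FCLT to a classical FCLT for a stationary ergodic sequence and then use the Br\'emaud--Massouli\'e coupling as the source of the quantitative mixing needed to conclude.

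First I would pass to discrete time. Set $X_{k}:=N(k,k+1]$ for $k\in\mathbb{Z}$. By the existence and uniqueness result of Br\'emaud--Massouli\'e cited above, $\{X_{k}\}$ is stationary and ergodic with $\mathbb{E}[X_{0}]=\mu$. The proposed FCLT for $(N_{\cdot t}-\cdot\mu t)/\sqrt{t}$ in $D[0,1]$ will follow, by a standard interpolation and tightness argument, from the FCLT for the partial sums $S_{n}:=\sum_{k=1}^{n}(X_{k}-\mu)$, provided one can control the edge term $N(\lfloor nt\rfloor,nt]-\mu(nt-\lfloor nt\rfloor)$ uniformly in $t\in[0,1]$ by a maximal inequality once a second moment bound is in hand.

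Next I would establish the $L^{2}$ bound $\mathbb{E}[X_{0}^{2}]<\infty$. Using the martingale representation
\[
X_{0}=\int_{0}^{1}\lambda_{s}\,ds+(M_{1}-M_{0}),\qquad M_{t}=N_{t}-\int_{0}^{t}\lambda_{s}\,ds,
\]
together with the bound
\[
\lambda_{s}\leq\lambda(0)+\alpha\int_{-\infty}^{s}h(s-u)N(du),
\]
and iterating once, one obtains an estimate of $\mathbb{E}[\lambda_{s}^{2}]$ in terms of $\mathbb{E}[\lambda_{s}]$ and the compensator of a stochastic integral. The assumption $\alpha\Vert h\Vert_{L^{1}}<1$ forces the resulting recursion to close (a Gr\"onwall/contraction step in $L^{2}$), and the integrability $\int th(t)\,dt<\infty$ in Assumption \ref{Assumption} ensures enough tail decay that the bound is uniform in $s$.

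The crucial and hardest step is decorrelation. Here I would exploit the Poisson embedding: realize $N$ via a unit rate Poisson measure $\bar{N}$ on $\mathbb{R}\times\mathbb{R}^{+}$ by thinning at height $\lambda_{t}$, and use two coupled copies driven by the same $\bar{N}$ but with different pasts. Monotonicity of $\lambda(\cdot)$ and of $h(\cdot)$ (from Assumption \ref{Assumption}) makes this coupling sandwich-preserving; the mean-valued analogue of the Br\'emaud--Massouli\'e contraction estimate, combined with $\int th(t)\,dt<\infty$, yields a summable bound on $|\mathrm{Cov}(X_{0},X_{j})|$. This gives that the series defining $\sigma^{2}$ in \eqref{sigmasquare} converges absolutely, and the quantitative coupling bound furnishes the mixing input (strong, or $\rho$-mixing) needed for a functional central limit theorem for the stationary ergodic sequence $\{X_{k}-\mu\}$, e.g.\ via a Gordin martingale approximation. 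Positivity $\sigma^{2}>0$ would be extracted from the non-degenerate martingale part $M_{1}-M_{0}$, whose quadratic variation equals $\int_{0}^{1}\lambda_{s}\,ds\geq\lambda(0)>0$ on a set of positive probability, preventing cancellation in the series.

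Finally, combining the discrete FCLT $S_{\lfloor n\cdot\rfloor}/\sqrt{n}\Rightarrow\sigma B(\cdot)$ with the edge estimate from the first paragraph and continuity of Brownian motion, I would conclude the claimed convergence in $D[0,1]$. The main obstacle is the covariance decay: without the linear immigration--birth representation, the only route to quantitative mixing passes through the Poisson-embedding coupling, and one must carefully use the monotonicity built into Assumption \ref{Assumption} plus the moment condition $\int th(t)\,dt<\infty$ to upgrade the qualitative stability of Br\'emaud--Massouli\'e to a summable covariance bound.
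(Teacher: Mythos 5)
Your overall strategy---discretize to $X_{k}=N(k,k+1]$, use the Br\'emaud--Massouli\'e Poisson-embedding coupling together with $\int_{0}^{\infty}th(t)\,dt<\infty$ to get quantitative decay of the dependence on the past, pass through a Gordin-type martingale approximation, and absorb the interpolation error with a maximal inequality once a second moment is available---is the same route the paper takes. However, two of your steps, as stated, have genuine gaps.

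First, the decorrelation step. A summable bound on $|\mathrm{Cov}(X_{0},X_{j})|$ is not sufficient for a CLT for a stationary ergodic sequence (it only gives convergence of the series defining $\sigma^{2}$), and the coupling does not deliver strong or $\rho$-mixing: it controls the conditional expectation of future counts given the past, not arbitrary events in the two $\sigma$-fields. What the Gordin/Billingsley theorem actually requires---and what the coupling does yield---is $\sum_{n\geq 1}\Vert\mathbb{E}[X_{n}-\mu\,|\,\mathcal{F}^{-\infty}_{0}]\Vert_{2}<\infty$. The paper reaches this by a symmetrization over two independent copies of the past, reducing the problem to bounding $\mathbb{E}^{\omega^{-}}[N(n,n+1]]-\mathbb{E}^{\emptyset}[N(n,n+1]]$ pathwise in $\omega^{-}$ via the iterated coupling, and then controlling the $L^{2}$ norm of the resulting bound using $\mathbb{E}[N[0,1]^{2}]<\infty$ and $\sum_{k}\alpha^{k}\Vert h\Vert_{L^{1}}^{k-1}\int_{0}^{\infty}th(t)\,dt<\infty$. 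You should verify this conditional-expectation condition rather than covariance summability or mixing; with your current formulation the CLT does not follow.

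Second, positivity of $\sigma$. Nondegeneracy of the compensated martingale $M_{t}=N_{t}-\int_{0}^{t}\lambda_{s}\,ds$ does not preclude $\sigma^{2}=0$ for the differently centered sums $N(0,n]-n\mu=M_{n}+\left(\int_{0}^{n}\lambda_{s}\,ds-n\mu\right)$: the compensator fluctuations can in principle cancel the martingale part (the coboundary phenomenon, $X_{k}-\mu=Z_{k}-Z_{k-1}$, gives bounded partial sums and $\sigma=0$ even though each increment is genuinely random). The paper's Lemma 3 instead constructs the Gordin martingale difference explicitly from $\eta_{n}=\sum_{j\geq n}\mathbb{E}[N(j,j+1]-\mu\,|\,\mathcal{F}^{-\infty}_{n+1}]$ and shows it is nonzero with positive probability by exhibiting an explicit event (nonempty past, no points in $(0,1]$) of positive probability on which the two conditional sums must differ. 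Your proposed $L^{2}$ recursion for $\mathbb{E}[X_{0}^{2}]$ is a reasonable alternative to the paper's exponential-moment lemma, but note that closing the fixed-point inequality requires an a priori finiteness argument (e.g., working with the empty-history process on compact time intervals and passing to the limit, as the paper does).
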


\begin{remark}
By a standard central limit theorem for martingales, i.e. Theoerem \ref{BTheoremII}, it is easy to see that
\begin{equation}
\frac{N_{\cdot t}-\int_{0}^{\cdot t}\lambda_{s}ds}{\sqrt{t}}\rightarrow\sqrt{\mu} B(\cdot),\quad\text{as $t\rightarrow\infty$,}
\end{equation}
where $\mu=\mathbb{E}[N[0,1]]$. In the linear case, say $\lambda(z)=\nu+z$, Bacry et al. \cite{Bacry} proved that $\sigma^{2}$ in 
\eqref{sigmasquare} satisfies $\sigma^{2}=\frac{\nu}{(1-\Vert h\Vert_{L^{1}})^{3}}>\mu=\frac{\nu}{1-\Vert h\Vert_{L^{1}}}$. That is
not surprising because $N_{\cdot t}-\cdot\mu t$ ``should'' have more fluctuations than $N_{\cdot t}-\int_{0}^{\cdot t}\lambda_{s}ds$.
Therefore, we guess that for nonlinear $\lambda(\cdot)$, $\sigma^{2}$ defined in \eqref{sigmasquare} should also satisfy $\sigma^{2}>\mu=\mathbb{E}[N[0,1]]$.
However, it might not be very easy to compute and say something about $\sigma^{2}$ in such a case.
\end{remark}

In the classical case for a sequence of i.i.d. random variables $X_{i}$ with mean $0$ and variance $1$, we have
the central limit theorem $\frac{1}{\sqrt{n}}\sum_{i=1}^{n}X_{i}\rightarrow N(0,1)$ as $n\rightarrow\infty$,
and we also have $\frac{\sum_{i=1}^{n}X_{i}}{\sqrt{n\log\log n}}\rightarrow 0$
in probability as $n\rightarrow\infty$, but the convergence does not hold a.s. The law of the iterated logarithm says
that $\limsup_{n\rightarrow\infty}\frac{\sum_{i=1}^{n}X_{i}}{\sqrt{n\log\log n}}=\sqrt{2}$ a.s. A functional version
of the law of the iterated logarithm is called Strassen's invariance principle.

It turns out that we also have a Strassen's invariance principle for nonlinear Hawkes processes under Assumption \ref{Assumption}.
\begin{theorem}\label{LIL}
Under Assumption \ref{Assumption}, let $N$ be the stationary and ergodic nonlinear Hawkes process with dynamics \eqref{dynamics}.
Let $X_{n}:=N[n-1,n]-\mu$, $S_{n}:=\sum_{i=1}^{n}X_{i}$, $s_{n}^{2}:=\mathbb{E}[S_{n}^{2}]$, $g(t)=\sup\{n: s_{n}^{2}\leq t\}$, and
for $t\in[0,1]$, let $\eta_{n}(t)$ be the usual linear interpolation, i.e.
\begin{equation}
\eta_{n}(t)=\frac{S_{k}+(s_{n}^{2}t-s_{k}^{2})(s_{k+1}^{2}-s_{k}^{2})^{-1}X_{k+1}}
{\sqrt{2s_{n}^{2}\log\log s_{n}^{2}}},\quad s_{k}^{2}\leq s_{n}^{2}t\leq s_{k+1}^{2},k=0,1,\ldots,n-1.
\end{equation}
Then, $g(e)<\infty$, $\{\eta_{n},n>g(e)\}$ is relatively compact in $C[0,1]$, the set of continuous functions
on $[0,1]$ equipped with uniform topology, and the set of limit points is
the set of absolutely continuous functions $f(\cdot)$ on $[0,1]$ such that $f(0)=0$ and $\int_{0}^{1}f'(t)^{2}dt\leq 1$.
\end{theorem}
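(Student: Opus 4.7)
The plan is to derive Strassen's invariance principle by a martingale approximation that reduces the statement to a standard functional law of the iterated logarithm for stationary ergodic martingale differences. First, Theorem~\ref{mainthm} already gives $s_n^2/n \to \sigma^2 \in (0,\infty)$, so $s_n^2 \to \infty$ and thus $g(t) < \infty$ for every $t \ge 0$; in particular $g(e)$ is well defined, and $s_{g(t)}^2 \sim t$ as $t \to \infty$. This also shows that replacing the normalizer $s_n^2$ by $n\sigma^2$ alters $\eta_n$ only by a $(1+o(1))$ factor that is harmless in the uniform topology, so I may work with $\sqrt{2 n \sigma^2 \log \log (n\sigma^2)}$.

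The core of the argument is a Gordin-type decomposition of the stationary ergodic sequence $\{X_n\}$ with respect to the filtration $\mathcal{G}_n := \mathcal{F}^{-\infty}_{n}$. If I can verify the summability condition
\[
\sum_{k=0}^{\infty} \bigl\| \mathbb{E}[X_k \mid \mathcal{G}_0] \bigr\|_{L^2} < \infty,
\]
then there exist a stationary ergodic $L^2$ martingale difference sequence $\{M_n\}$ adapted to $\{\mathcal{G}_n\}$ and a stationary sequence $\{Z_n\}\subset L^2$ with $X_n = M_n + Z_n - Z_{n+1}$. Summation yields $S_n = \sum_{i=1}^n M_i + Z_1 - Z_{n+1}$, and the boundary term $Z_{n+1}$ is $o(\sqrt{n \log \log n})$ a.s.\ by a standard Borel--Cantelli estimate applied to the stationary $L^2$ sequence $\{Z_n\}$; matching variances forces $\mathbb{E}[M_1^2] = \sigma^2$.

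Verifying the summability is the main obstacle. The idea is to couple two versions of the Hawkes process that share the past $(-\infty, 0]$ but use independent driving Poisson noise on $(0,\infty)$: the Lipschitz assumption $\alpha \| h\|_{L^1} < 1$ permits iteration of the Picard-style contraction of Br\'emaud--Massouli\'e \cite{Bremaud} to show that $\|\mathbb{E}[X_k \mid \mathcal{G}_0] - \mu\|_{L^2}$ is controlled by a convolution power involving the tail of $h$. The extra hypothesis $\int_0^\infty t\, h(t) \, dt < \infty$ from Assumption~\ref{Assumption}, combined with the monotonicity of $h$, makes $\int_k^\infty h(s)\, ds$ summable in $k$, which in turn delivers the required summability $\sum_k \| \mathbb{E}[X_k\mid \mathcal{G}_0]\|_{L^2} < \infty$.

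Once the decomposition is in place, I apply the Strassen-type functional LIL of Heyde and Scott for stationary ergodic $L^2$ martingale differences to $\sum_{i=1}^n M_i$, yielding a.s.\ relative compactness in $C[0,1]$ of the corresponding linear interpolations normalized by $\sqrt{2 n \sigma^2 \log \log (n\sigma^2)}$, with limit set exactly the Strassen ball $\{f \in AC[0,1]: f(0)=0,\ \int_0^1 f'(t)^2 \, dt \le 1\}$. Since the boundary contribution $Z_1 - Z_{n+1}$ and the reparametrization between $s_n^2$ and $n\sigma^2$ vanish uniformly on $[0,1]$, the same conclusion transfers verbatim to $\eta_n$, completing the proof.
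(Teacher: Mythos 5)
Your proposal is correct and follows essentially the same route as the paper: the paper's proof is a direct citation of Heyde and Scott \cite{Heyde}, whose hypotheses are exactly the two ingredients you identify, namely the summability condition $\sum_{k}\Vert\mathbb{E}[X_{k}|\mathcal{F}^{-\infty}_{0}]\Vert_{2}<\infty$ (which is \eqref{finitesum}, already established via the coupling/Poisson-embedding argument in the proof of Theorem \ref{mainthm}) and $\sigma>0$ (Lemma \ref{positivesigma}). Your Gordin decomposition plus martingale functional LIL is precisely the content of the Heyde--Scott theorem unpacked, so no new verification is needed beyond what the paper already proves.
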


\section{Proofs}

This section is devoted to the proof of Theorem \ref{mainthm}. We use a standard central limit theorem, i.e. Theorem \ref{BTheorem}. 
In our proof, we need the fact that $\mathbb{E}[N[0,1]^{2}]<\infty$, which is proved in Lemma \ref{secondmoment}.
Lemma \ref{secondmoment} is proved by proving a stronger result first, i.e. Lemma \ref{midstep}.
We will also prove Lemma \ref{positivesigma} to guarantee that $\sigma>0$ so that the central limit theorem is not degenerate.

Let us first quote the two necessary central limit theorems from Billingsley \cite{Billingsley}. In both Theorem \ref{BTheorem}
and Theorem \ref{BTheoremII}, the filtrations are natural the ones, i.e. given a stochastic process $(X_{n})_{n\in\mathbb{Z}}$,
$\mathcal{F}^{a}_{b}:=\sigma(X_{n},a\leq n\leq b)$, for $-\infty\leq a\leq b\leq\infty$.

\begin{theorem}[Page 197 \cite{Billingsley}]\label{BTheorem}
Suppose $X_{n}$, $n\in\mathbb{Z}$, is an ergodic stationary sequence such that $\mathbb{E}[X_{n}]=0$ and
\begin{equation}
\sum_{n\geq 1}\Vert\mathbb{E}[X_{0}|\mathcal{F}^{-\infty}_{-n}]\Vert_{2}<\infty,
\end{equation}
where $\Vert Y\Vert_{2}=(\mathbb{E}[Y^{2}])^{1/2}$. Let $S_{n}=X_{1}+\cdots+X_{n}$. 
Then $S_{[n\cdot]}/\sqrt{n}\rightarrow\sigma B(\cdot)$ weakly, where the weak convergence is on $D[0,1]$ equipped with the Skorohod topology and
$\sigma^{2}=\mathbb{E}[X_{0}^{2}]+2\sum_{n=1}^{\infty}\mathbb{E}[X_{0}X_{n}]$. The series converges absolutely.
\end{theorem}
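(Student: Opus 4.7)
The strategy is the Gordin--Heyde martingale approximation: construct a stationary ergodic square-integrable martingale difference sequence $(D_n)$ adapted to the natural filtration whose partial sums $M_n$ approximate $S_n$ in $L^2$ up to an error of order $o(\sqrt{n})$, then invoke the functional CLT for stationary ergodic martingale differences (for instance Billingsley's Theorem~23.1) combined with a convergence-together lemma on $D[0,1]$ to transfer the limit to $S_{[n\cdot]}/\sqrt{n}$.

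Since the summability hypothesis forces $\Vert\mathbb{E}[X_0|\mathcal{F}^{-\infty}_{-n}]\Vert_2 \to 0$, reverse martingale convergence gives $\mathbb{E}[X_0|\mathcal{F}^{-\infty}_{-\infty}] = 0$ almost surely. I would then set
\begin{equation*}
D_0 := \sum_{k=0}^{\infty} \bigl(\mathbb{E}[X_k|\mathcal{F}^{-\infty}_0] - \mathbb{E}[X_k|\mathcal{F}^{-\infty}_{-1}]\bigr), \qquad g_0 := \sum_{k=1}^{\infty} \mathbb{E}[X_k|\mathcal{F}^{-\infty}_0],
\end{equation*}
noting that by stationarity $\Vert\mathbb{E}[X_k|\mathcal{F}^{-\infty}_0]\Vert_2 = \Vert\mathbb{E}[X_0|\mathcal{F}^{-\infty}_{-k}]\Vert_2$, so both series converge in $L^2$. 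With $D_n := D_0 \circ T^n$ and $g_n := g_0 \circ T^n$ for the ergodic shift $T$, $D_0$ is $\mathcal{F}^{-\infty}_0$-measurable and $\mathbb{E}[D_0|\mathcal{F}^{-\infty}_{-1}] = 0$ term by term, so $(D_n)$ is a stationary ergodic martingale difference sequence. Reindexing and using that $g_0 \circ T^{-1}$ equals $\sum_{j=0}^{\infty} \mathbb{E}[X_j|\mathcal{F}^{-\infty}_{-1}]$ yields the telescoping identity $X_n = D_n + g_{n-1} - g_n$, and hence $S_n = M_n + g_0 - g_n$ with $M_n := D_1 + \cdots + D_n$.

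Because $\Vert g_0 - g_n\Vert_2 \leq 2\Vert g_0\Vert_2$ is bounded uniformly in $n$, the remainder satisfies $(S_n - M_n)/\sqrt{n} \to 0$ in $L^2$, so $S_{[n\cdot]}/\sqrt{n}$ and $M_{[n\cdot]}/\sqrt{n}$ share the same weak limit on $D[0,1]$, and the martingale FCLT delivers $M_{[n\cdot]}/\sqrt{n} \Rightarrow \sqrt{\mathbb{E}[D_0^2]}\,B(\cdot)$. For the variance identification, Cauchy--Schwarz and stationarity give $|\mathbb{E}[X_0 X_n]| \leq \Vert X_0\Vert_2 \Vert\mathbb{E}[X_0|\mathcal{F}^{-\infty}_{-n}]\Vert_2$, so $\sigma^2 := \mathbb{E}[X_0^2] + 2\sum_{n \geq 1} \mathbb{E}[X_0 X_n]$ converges absolutely; comparing $\mathbb{E}[S_n^2]/n \to \sigma^2$ (by Ces\`{a}ro averaging of the covariances) with $\mathbb{E}[M_n^2]/n = \mathbb{E}[D_0^2]$ through the bounded-in-$L^2$ remainder forces $\mathbb{E}[D_0^2] = \sigma^2$. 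The main obstacle is the martingale approximation step itself, namely verifying the $L^2$-convergence of the defining series for $D_0$ and $g_0$ and then deriving the telescoping identity $X_n = D_n + g_{n-1} - g_n$ by careful bookkeeping of conditional expectations; once that algebraic core is in place, the remainder of the argument is a routine combination of the martingale FCLT with Slutsky-type convergence-together.
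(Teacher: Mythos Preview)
The paper does not give its own proof of this statement: Theorem~\ref{BTheorem} is quoted verbatim from Billingsley \cite{Billingsley} as a tool and is used without justification beyond the citation. So there is no ``paper's proof'' to compare against; your task was to re-derive a textbook result.

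That said, your sketch is correct and is exactly the standard Gordin--Heyde argument that underlies the Billingsley statement. The $L^2$-convergence of the series for $g_0$ and $D_0$ follows from the summability hypothesis via stationarity as you say, the telescoping identity is right, and the variance identification through $\mathbb{E}[S_n^2]/n$ is the usual route. One point you pass over a bit quickly: for the \emph{functional} convergence-together step on $D[0,1]$ you need not just $(S_n-M_n)/\sqrt{n}\to 0$ in $L^2$ but $\max_{1\le k\le n}|g_0-g_k|/\sqrt{n}\to 0$ in probability; this follows from stationarity of $(g_k)$ and $\mathbb{E}[g_0^2]<\infty$ by the truncated-Chebychev estimate $n\,\mathbb{P}(|g_0|>\epsilon\sqrt{n})\to 0$, and is worth one line. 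It is also worth noting that the paper \emph{does} carry out precisely your martingale decomposition---but in the proof of Lemma~\ref{positivesigma}, where $\eta_n$ plays the role of your $g_n$ and $Y_n$ that of your $D_n$, in order to identify $\sigma^2$ with the variance of a single martingale increment and thereby show $\sigma>0$.
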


\begin{theorem}[Page 196 \cite{Billingsley}]\label{BTheoremII}
Suppose $X_{n}$, $n\in\mathbb{Z}$, is an erogdic, stationary sequence of square integrable martingale differences,
i.e. $\sigma^{2}=\mathbb{E}[X_{n}^{2}]<\infty$, and let $\mathbb{E}[X_{n}|\mathcal{F}^{-\infty}_{n-1}]=0$. Let
$S_{n}=X_{1}+\cdots+X_{n}$. Then $S_{[n\cdot]}/\sqrt{n}\rightarrow\sigma B(\cdot)$ weakly, where
the weak convergence is on $D[0,1]$ equipped with the Skorohod topology.
\end{theorem}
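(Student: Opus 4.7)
The plan is to prove the functional martingale central limit theorem in two stages, following the classical Billingsley--McLeish strategy: convergence of the finite-dimensional distributions, together with tightness in $D[0,1]$. Throughout, write $M_n(t) := S_{[nt]}/\sqrt{n}$. Since the $X_i$ are orthogonal stationary martingale differences, $(S_k)_{k \ge 0}$ is a square-integrable $\{\mathcal{F}^{-\infty}_k\}$-martingale with $\mathbb{E}[S_n^2] = n\sigma^2$.

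For the finite-dimensional distributions, fix $0 = t_0 < t_1 < \cdots < t_r \le 1$ and reals $\alpha_1,\dots,\alpha_r$. By the Cram\'er--Wold device I would reduce to the one-dimensional statement
\[
\frac{1}{\sqrt{n}}\sum_{i=1}^{[nt_r]}\beta_{n,i}\, X_i \;\Longrightarrow\; N\!\Big(0,\,\sigma^2\sum_{j=1}^r \alpha_j^2 (t_j - t_{j-1})\Big),
\]
where $\beta_{n,i} := \alpha_j$ for $i \in ([nt_{j-1}], [nt_j]]$. The summands form a stationary triangular array of $\{\mathcal{F}^{-\infty}_i\}$-martingale differences, so a standard martingale CLT (Brown or McLeish) applies once two conditions are verified. (i) \emph{Conditional variance:} $\tfrac{1}{n}\sum_{i=1}^{[nt_r]}\beta_{n,i}^2\,\mathbb{E}[X_i^2 \mid \mathcal{F}^{-\infty}_{i-1}] \to \sigma^2\sum_j \alpha_j^2 (t_j - t_{j-1})$ in probability, which follows by applying Birkhoff's ergodic theorem piecewise on each subinterval $(t_{j-1}, t_j]$ to the stationary ergodic sequence $\{\mathbb{E}[X_i^2 \mid \mathcal{F}^{-\infty}_{i-1}]\}$ (ergodicity being inherited from the shift-invariance of the underlying process). (ii) \emph{Lindeberg:} by stationarity,
\[
\sum_i \frac{1}{n}\,\mathbb{E}\!\left[\beta_{n,i}^2 X_i^2 \mathbf{1}_{|X_i|>\epsilon\sqrt{n}/\max_j|\alpha_j|}\right] \;\le\; \Big(\max_j \alpha_j^2\Big)\, \mathbb{E}\!\left[X_1^2 \mathbf{1}_{|X_1|>c\sqrt{n}}\right] \to 0
\]
by dominated convergence, using $X_1 \in L^2$.

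For tightness in $D[0,1]$, compact containment is immediate from Doob's $L^2$ maximal inequality: $\mathbb{P}(\sup_{t\le 1}|M_n(t)|>\lambda) \le \sigma^2/\lambda^2$. For the modulus of continuity I would apply Aldous' criterion: for any $\{\mathcal{F}^{-\infty}_{[nt]}\}$-stopping times $\tau_n \le 1-\delta_n$ and deterministic $\delta_n \downarrow 0$, orthogonality of martingale increments and optional sampling yield
\[
\mathbb{E}\big[(M_n(\tau_n+\delta_n)-M_n(\tau_n))^2\big] \;=\; \frac{1}{n}\,\mathbb{E}\!\left[\,\sum_{i=[n\tau_n]+1}^{[n(\tau_n+\delta_n)]}\! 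X_i^2\right] \;\le\; \sigma^2\delta_n + \frac{\sigma^2}{n},
\]
so $M_n(\tau_n+\delta_n) - M_n(\tau_n) \to 0$ in probability. Combining finite-dimensional convergence with tightness gives $M_n \Rightarrow \sigma B$ weakly in $D[0,1]$ with Skorokhod topology.

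The step I expect to require the most care is the conditional-variance condition (i): one has to verify that ergodicity of the underlying shift transfers to the derived sequence $\{\mathbb{E}[X_i^2 \mid \mathcal{F}^{-\infty}_{i-1}]\}$, and then patch the ergodic-average limits on the subintervals $(t_{j-1}, t_j]$ together uniformly in $j$. Once this is in place, the Lindeberg verification and the Doob/Aldous tightness argument are short and standard, and the identification of the limit as $\sigma B$ is automatic: a continuous process with independent centered Gaussian increments whose variance is $\sigma^2(t_j - t_{j-1})$ on each subinterval is Brownian motion scaled by $\sigma$.
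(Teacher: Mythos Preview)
This theorem is not proved in the paper at all: it is quoted from Billingsley \cite{Billingsley} (page~196) and used as a black box, once in the remark following Theorem~\ref{mainthm} and once in the proof of Lemma~\ref{positivesigma}. There is therefore no in-paper argument to compare your proposal against.

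That said, your sketch is the standard Billingsley--McLeish route and is sound in outline. One point does need more care than you indicate. In the Aldous step you assert
\[
\frac{1}{n}\,\mathbb{E}\!\left[\sum_{i=[n\tau_n]+1}^{[n(\tau_n+\delta_n)]} X_i^{2}\right]\le \sigma^{2}\delta_n+\frac{\sigma^{2}}{n},
\]
implicitly using $\mathbb{E}[X_i^{2}]=\sigma^{2}$ term by term. But the range of summation is random, and for a stopping time $\rho$ one does not have $\mathbb{E}[X_{\rho+j}^{2}]=\sigma^{2}$ from stationarity alone; the shift you would need is by a random amount. The equality preceding it (via optional sampling for the martingale $S_k^{2}-\sum_{i\le k}X_i^{2}$) is fine, but the displayed inequality is not justified as written. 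The repair is routine: either pass to the predictable bracket $\langle S\rangle_k=\sum_{i\le k}\mathbb{E}[X_i^{2}\mid\mathcal{F}^{-\infty}_{i-1}]$ and control increments of $\langle S\rangle$ over short random windows using the $L^{1}$ ergodic theorem for the stationary sequence $\mathbb{E}[X_i^{2}\mid\mathcal{F}^{-\infty}_{i-1}]$, or bypass Aldous and establish tightness directly from Doob's maximal inequality together with the deterministic-interval moment bound $\mathbb{E}[(M_n(t)-M_n(s))^{2}]=\sigma^{2}([nt]-[ns])/n$, which is the path Billingsley actually takes. The finite-dimensional part and the Lindeberg verification are correct as you have them.
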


Now, we are ready to prove our main result.

\begin{proof}[Proof of Theorem \ref{mainthm}]
Since in the stationary regime, $\mathbb{E}[N[n,n+1]]=\mathbb{E}[N[0,1]]$ for any $n\in\mathbb{Z}$ and let us denote
$\mathbb{E}[N[0,1]]=\mu$. In order to apply Theorem \ref{BTheorem}, let us first prove that
\begin{equation}\label{finitesum}
\sum_{n=1}^{\infty}\left\{\mathbb{E}\left[\left(\mathbb{E}[N(n,n+1]-\mu|\mathcal{F}^{-\infty}_{0}]\right)^{2}\right]\right\}^{1/2}<\infty.
\end{equation}
Let $\mathbb{E}^{\omega^{-}_{1}}[N(n,n+1]]$ and $\mathbb{E}^{\omega^{-}_{2}}[N(n,n+1]]$ be two independent copies of
$\mathbb{E}[N(n,n+1]|\mathcal{F}^{-\infty}_{0}]$. It is easy to check that
\begin{align}
&\frac{1}{2}\mathbb{E}\left\{\left[\mathbb{E}^{\omega^{-}_{1}}[N(n,n+1]]-\mathbb{E}^{\omega^{-}_{2}}[N(n,n+1]]\right]^{2}\right\}
\\
&=\frac{1}{2}\mathbb{E}\left[\mathbb{E}^{\omega^{-}_{1}}[N(n,n+1]]^{2}\right]
+\frac{1}{2}\mathbb{E}\left[\mathbb{E}^{\omega^{-}_{2}}[N(n,n+1]]^{2}\right]\nonumber
\\
&\phantom{=\frac{1}{2}\mathbb{E}\left[\mathbb{E}^{\omega^{-}_{1}}[N(n,n+1]]^{2}\right]}
-\mathbb{E}\left[\mathbb{E}^{\omega^{-}_{1}}[N(n,n+1]]\mathbb{E}^{\omega^{-}_{2}}[N(n,n+1]]\right]\nonumber
\\
&=\mathbb{E}\left[\mathbb{E}[N(n,n+1]|\mathcal{F}^{-\infty}_{0}]^{2}\right]-\mu^{2}\nonumber
\\
&=\mathbb{E}\left[(\mathbb{E}[N(n,n+1]-\mu|\mathcal{F}^{-\infty}_{0}])^{2}\right].\nonumber
\end{align}
Therefore, we have
\begin{align}
&\mathbb{E}\left[(\mathbb{E}[N(n,n+1]-\mu|\mathcal{F}^{-\infty}_{0}])^{2}\right]
\\
&=\frac{1}{2}\mathbb{E}\left\{\left[\mathbb{E}^{\omega^{-}_{1}}[N(n,n+1]]-\mathbb{E}^{\omega^{-}_{2}}[N(n,n+1]]\right]^{2}\right\}\nonumber
\\
&\leq\mathbb{E}\left\{\left[\mathbb{E}^{\omega^{-}_{1}}[N(n,n+1]]-\mathbb{E}^{\emptyset}[N(n,n+1]]\right]^{2}\right\}\nonumber
\\
&\phantom{\leq\mathbb{E}\mathbb{E}^{\omega^{-}_{1}}[N(n,n+1]]}
+\mathbb{E}\left\{\left[\mathbb{E}^{\omega^{-}_{2}}[N(n,n+1]]-\mathbb{E}^{\emptyset}[N(n,n+1]]\right]^{2}\right\}\nonumber
\\
&=2\mathbb{E}\left\{\left[\mathbb{E}^{\omega^{-}_{1}}[N(n,n+1]]-\mathbb{E}^{\emptyset}[N(n,n+1]]\right]^{2}\right\},\nonumber
\end{align}
where $\mathbb{E}^{\emptyset}[N(n,n+1]]$ denotes the expectation of the number of points in $(n,n+1]$ for the 
Hawkes process with the same dynamics \eqref{dynamics} and empty history, i.e. $N(-\infty,0]=0$.

Next, let us estimate $\mathbb{E}^{\omega^{-}_{1}}[N(n,n+1]]-\mathbb{E}^{\emptyset}[N(n,n+1]]$. 
$\mathbb{E}^{\omega^{-}_{1}}[N(n,n+1]]$
is the expectation of the number of points in $(n,n+1]$ for the Hawkes process with intensity
$\lambda_{t}=\lambda\left(\sum_{\tau: \tau\in\omega^{-}_{1}\cup\omega[0,t)}h(t-\tau)\right)$.
It is well defined for a.e. $\omega^{-}_{1}$ under $\mathbb{P}$ because, under Assumption \ref{Assumption},
\begin{equation}
\mathbb{E}[\lambda_{t}]\leq\lambda(0)+\alpha\mathbb{E}\left[\int_{-\infty}^{t}h(t-s)N(ds)\right]
=\lambda(0)+\alpha\Vert h\Vert_{L^{1}}\mathbb{E}[N[0,1]]<\infty,
\end{equation}
which implies that $\lambda_{t}<\infty$ $\mathbb{P}$-a.s.

It is clear that $\mathbb{E}^{\omega^{-}_{1}}[N(n,n+1]]\geq\mathbb{E}^{\emptyset}[N(n,n+1]]$ almost surely, 
so we can use a coupling method to estimate the difference. 
We will follow the ideas in Br\'{e}maud and Massouli\'{e} \cite{Bremaud} using the Poisson embedding method.
Consider $(\Omega,\mathcal{F},\mathcal{P})$, the canonical space of a point process on $\mathbb{R}^{+}\times\mathbb{R}^{+}$ 
in which $\overline{N}$ is Poisson with intensity $1$ under the probability measure $\mathcal{P}$. 
Then the Hawkes process $N^{0}$ with empty past history and intensity
$\lambda^{0}_{t}$ satisfies the following.
\begin{equation}
\begin{cases}
\lambda^{0}_{t}=\lambda\left(\int_{(0,t)}h(t-s)N^{0}(ds)\right)& t\in\mathbb{R}^{+},
\\
N^{0}(C)=\int_{C}\overline{N}(dt\times[0,\lambda^{0}_{t}])& C\in\mathcal{B}(\mathbb{R}^{+}).
\end{cases}
\end{equation}
For $n\geq 1$, let us define recursively $\lambda^{n}_{t}$, $D_{n}$ and $N^{n}$ as follows.
\begin{equation}\label{canonical}
\begin{cases}
\lambda^{n}_{t}=\lambda\left(\int_{(0,t)}h(t-s)N^{n-1}(ds)+\sum_{\tau\in\omega^{-}_{1}}h(t-\tau)\right)& t\in\mathbb{R}^{+},
\\
D_{n}(C)=\int_{C}\overline{N}(dt\times[\lambda^{n-1}_{t},\lambda^{n}_{t}])& C\in\mathcal{B}(\mathbb{R}^{+}),
\\
N^{n}(C)=N^{n-1}(C)+D_{n}(C) & C\in\mathcal{B}(\mathbb{R}^{+}).
\end{cases}
\end{equation}
Following the arguments as in Br\'{e}maud and Massouli\'{e} \cite{Bremaud}, we know that each $\lambda^{n}_{t}$
is an $\mathcal{F}^{\overline{N}}_{t}$-intensity of $N^{n}$, where $\mathcal{F}^{\overline{N}}_{t}$ is the $\sigma$-algebra
generated by $\overline{N}$ up to time $t$. 
By our Assumption \ref{Assumption}, $\lambda(\cdot)$ is
increasing, and it is clear that $\lambda^{n}(t)$ and $N^{n}(C)$ increase in $n$ for all $t\in\mathbb{R}^{+}$
and $C\in\mathcal{B}(\mathbb{R}^{+})$. Thus, $D_{n}$ is well defined and also that as $n\rightarrow\infty$, the limiting
processes $\lambda_{t}$ and $N$ exist. $N$ counts the number of points
of $\overline{N}$ below the curve $t\mapsto\lambda_{t}$ and admits $\lambda_{t}$ as an $\mathcal{F}^{\overline{N}}_{t}$-intensity.
By the monotonicity properties of $\lambda^{n}_{t}$ and $N^{n}$, we have
\begin{align}
&\lambda^{n}_{t}\leq\lambda\left(\int_{(0,t)}h(t-s)N(ds)+\sum_{\tau\in\omega^{-}_{1}}h(t-\tau)\right),
\\
&\lambda_{t}\geq\lambda\left(\int_{(0,t)}h(t-s)N^{n}(ds)+\sum_{\tau\in\omega^{-}_{1}}h(t-\tau)\right).
\end{align}
Letting $n\rightarrow\infty$ (it is valid since we
assume that $\lambda(\cdot)$ is Lipschitz and thus continuous), 
we conclude that $N$, $\lambda_{t}$ satisfies the dynamics \eqref{dynamics}.
Therefore, with intensity $\lambda_{t}$, $N=N^{0}+\sum_{i=1}^{\infty}D_{i}$ is the Hawkes process with past
history $\omega^{-}_{1}$.

We can then estimate the difference by noticing that
\begin{equation}
\mathbb{E}^{\omega^{-}_{1}}[N(n,n+1]]-\mathbb{E}^{\emptyset}[N(n,n+1]]=\sum_{i=1}^{\infty}\mathbb{E}^{\mathcal{P}}[D_{i}(n,n+1]].
\end{equation}
Here $\mathbb{E}^{\mathcal{P}}$ means the expectation with respect to $\mathcal{P}$, the probability measure on 
the canonical space that we defined earlier.

We have
\begin{align}\label{estimation}
&\mathbb{E}^{\mathcal{P}}[D_{1}(n,n+1]]
\\
&=\mathbb{E}^{\mathcal{P}}\left[\int_{n}^{n+1}(\lambda^{1}(t)-\lambda^{0}(t))dt\right]\nonumber
\\
&=\mathbb{E}^{\mathcal{P}}\left[\int_{n}^{n+1}\lambda\left(\sum_{\tau<t,\tau\in N^{0}\cup\omega^{-}_{1}}h(t-\tau)\right)
-\lambda\left(\sum_{\tau<t,\tau\in N^{0}\cup\emptyset}h(t-\tau)\right)dt\right]\nonumber
\\
&\leq\alpha\int_{n}^{n+1}\sum_{\tau\in\omega_{1}^{-}}h(t-\tau)dt,\nonumber
\end{align}
where the first equality in \eqref{estimation} is due to the construction of $D_{1}$ in \eqref{canonical},
the second equality in \eqref{estimation} is due to the definitions of $\lambda^{1}$ and $\lambda^{0}$ in \eqref{canonical}
and finally the inequality in \eqref{estimation} is due to the fact that
$\lambda(\cdot)$ is $\alpha$-Lipschitz by Assumption \ref{Assumption}.
Similarly,
\begin{align}
\mathbb{E}^{\mathcal{P}}[D_{2}(n,n+1]]&\leq\mathbb{E}^{\omega_{1}^{-}}\left[\alpha\int_{n}^{n+1}\sum_{\tau\in D_{1},\tau<t}h(t-\tau)dt\right]
\\
&\leq\sum_{\tau\in\omega_{1}^{-}}\alpha^{2}\int_{n}^{n+1}\int_{0}^{t}h(t-s)h(s-\tau)dsdt.\nonumber
\end{align}
Iteratively, we have, for any $k\in\mathbb{N}$,
\begin{align}
\mathbb{E}^{\mathcal{P}}[D_{k}(n,n+1]]
\leq\sum_{\tau\in\omega_{1}^{-}}
\alpha^{k}&\int_{n}^{n+1}\int_{0}^{t_{k}}\cdots\int_{0}^{t_{2}}
h(t_{k}-t_{k-1})h(t_{k-1}-t_{k-2})\nonumber
\\
&\cdots h(t_{2}-t_{1})h(t_{1}-\tau)dt_{1}\cdots dt_{k}
=:\sum_{\tau\in\omega_{1}^{-}}K_{k}(n,\tau).\nonumber
\end{align}
Now let $K(n,\tau):=\sum_{k=1}^{\infty}K_{k}(n,\tau)$. Then,
\begin{align}
&\mathbb{E}\left\{\left[\mathbb{E}^{\omega^{-}_{1}}[N(n,n+1]]-\mathbb{E}^{\emptyset}[N(n,n+1]]\right]^{2}\right\}
\\
&\leq\mathbb{E}\left[\left(\sum_{\tau\in\omega_{1}^{-}}K(n,\tau)\right)^{2}\right]\nonumber
\\
&\leq\mathbb{E}\left[\sum_{i,j\leq 0}K(n,i)K(n,j)N[i,i+1]N[j,j+1]\right]\nonumber
\\
&=\sum_{i,j\leq 0}K(n,i)K(n,j)\mathbb{E}[N[i,i+1]N[j,j+1]]\nonumber
\\
&\leq\sum_{i,j\leq 0}K(n,i)K(n,j)\frac{1}{2}\left\{\mathbb{E}[N[i,i+1]^{2}]+\mathbb{E}[N[j,j+1]^{2}]\right\}\nonumber
\\
&=\mathbb{E}[N[0,1]^{2}]\left(\sum_{i\leq 0}K(n,i)\right)^{2}.\nonumber
\end{align}
Here, $\mathbb{E}[N[0,1]^{2}]<\infty$ by Lemma \ref{secondmoment}. Therefore, we have
\begin{align}
&\sum_{n=1}^{\infty}\left\{\mathbb{E}\left[\left(\mathbb{E}[N(n,n+1]-\mu|\mathcal{F}^{-\infty}_{0}]\right)^{2}\right]\right\}^{1/2}
\\
&\leq\sqrt{2\mathbb{E}[N[0,1]^{2}]}\sum_{n=1}^{\infty}\sum_{i=-\infty}^{0}K(n,i)\nonumber
\\
&\leq\sqrt{2\mathbb{E}[N[0,1]^{2}]}\sum_{k=1}^{\infty}
\alpha^{k}\int_{0}^{\infty}\int_{0}^{t_{k}}\cdots\int_{0}^{t_{2}}\int_{-\infty}^{0}\nonumber
\\
&h(t_{k}-t_{k-1})h(t_{k-1}-t_{k-2})\cdots h(t_{2}-t_{1})h(t_{1}-s)dsdt_{1}\cdots dt_{k}.\nonumber
\end{align}
Let $H(t):=\int_{t}^{\infty}h(s)ds$. It is easy to check that $\int_{0}^{\infty}H(t)dt=\int_{0}^{\infty}th(t)dt<\infty$ by Assumption \ref{Assumption}. 
We have
\begin{align}
&\alpha^{k}\int_{0}^{\infty}\int_{0}^{t_{k}}\cdots\int_{0}^{t_{2}}\int_{-\infty}^{0}
\\
& h(t_{k}-t_{k-1})h(t_{k-1}-t_{k-2})\cdots h(t_{2}-t_{1})h(t_{1}-s)dsdt_{1}\cdots dt_{k}\nonumber
\\
&=\alpha^{k}\int_{0}^{\infty}\int_{0}^{t_{k}}\cdots\int_{0}^{t_{2}}
h(t_{k}-t_{k-1})h(t_{k-1}-t_{k-2})\cdots h(t_{2}-t_{1})H(t_{1})dt_{1}\cdots dt_{k}\nonumber
\\
&=\alpha^{k}\int_{0}^{\infty}\cdots\int_{t_{k-2}}^{\infty}\int_{t_{k-1}}^{\infty}h(t_{k}-t_{k-1})dt_{k}h(t_{k-1}-t_{k-2})dt_{k-1}
\cdots H(t_{1})dt_{1}\nonumber
\\
&=\alpha^{k}\Vert h\Vert_{L^{1}}^{k-1}\int_{0}^{\infty}H(t_{1})dt_{1}=\alpha^{k}\Vert h\Vert_{L^{1}}^{k-1}\int_{0}^{\infty}th(t)dt.\nonumber
\end{align}
Since $\alpha\Vert h\Vert_{L^{1}}<1$, we conclude that
\begin{align}
&\sum_{n=1}^{\infty}\left\{\mathbb{E}\left[\left(\mathbb{E}[N(n,n+1]-\mu|\mathcal{F}^{-\infty}_{0}]\right)^{2}\right]\right\}^{1/2}
\\
&\leq\sum_{k=1}^{\infty}\sqrt{2\mathbb{E}[N[0,1]^{2}]}\alpha^{k}\Vert h\Vert_{L^{1}}^{k-1}\int_{0}^{\infty}th(t)dt\nonumber
\\
&=\sqrt{2\mathbb{E}[N[0,1]^{2}]}\cdot\frac{\alpha}{1-\alpha\Vert h\Vert_{L^{1}}}\cdot\int_{0}^{\infty}th(t)dt<\infty.\nonumber
\end{align}
Hence, by Theorem \ref{BTheorem}, we have
\begin{equation}
\frac{N_{[\cdot t]}-\mu[\cdot t]}{\sqrt{t}}\rightarrow\sigma B(\cdot)\quad\text{as $t\rightarrow\infty$,}
\end{equation}
where
\begin{equation}
\sigma^{2}=\mathbb{E}[(N[0,1]-\mu)^{2}]+2\sum_{j=1}^{\infty}\mathbb{E}[(N[0,1]-\mu)(N[j,j+1]-\mu)]<\infty.\label{sigmadefn}
\end{equation}
By Lemma \ref{positivesigma}, $\sigma>0$. Now, finally, for any $\epsilon>0$, for $t$ sufficiently large,
\begin{align}
&\mathbb{P}\left(\sup_{0\leq s\leq 1}\left|\frac{N_{[st]}-\mu[st]}{\sqrt{t}}
-\frac{N_{st}-\mu st}{\sqrt{t}}\right|>\epsilon\right)
\\
&=\mathbb{P}\left(\sup_{0\leq s\leq 1}\left|(N_{[st]}-N_{st})+\mu(st-[st])\right|>\epsilon\sqrt{t}\right)\nonumber
\\
&\leq\mathbb{P}\left(\sup_{0\leq s\leq 1}\left|N_{[st]}-N_{st}\right|+\mu>\epsilon\sqrt{t}\right)\nonumber
\\
&\leq\mathbb{P}\left(\max_{0\leq k\leq [t],k\in\mathbb{Z}}N[k,k+1]>\epsilon\sqrt{t}-\mu\right)\nonumber
\\
&\leq([t]+1)\mathbb{P}(N[0,1]>\epsilon\sqrt{t}-\mu)\nonumber
\\
&\leq\frac{[t]+1}{(\epsilon\sqrt{t}-\mu)^{2}}\int_{N[0,1]>\epsilon\sqrt{t}-\mu}N[0,1]^{2}d\mathbb{P}\rightarrow 0,\nonumber
\end{align}
as $t\rightarrow\infty$ by Lemma \ref{secondmoment}. Hence, we conclude that $\frac{N_{\cdot t}-\cdot\mu t}{\sqrt{t}}\rightarrow\sigma B(\cdot)$
as $t\rightarrow\infty$.
\end{proof}

The following Lemma \ref{midstep} is used to prove Lemma \ref{secondmoment}.

\begin{lemma}\label{midstep}
There exists some $\theta>0$ such that 
$\sup_{t\geq 0}\mathbb{E}^{\emptyset}\left[e^{\int_{0}^{t}\theta h(t-s)N(ds)}\right]<\infty$.
\end{lemma}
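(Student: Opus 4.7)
The plan is a two-step reduction: first dominate the empty-history nonlinear process by a linear Hawkes process, then analyze the linear case through the cluster (immigration--birth) decomposition. Since $\lambda$ is increasing with $\lambda(x)\leq\lambda(0)+\alpha x$, I would use the Poisson embedding of \eqref{canonical} with $\omega_{1}^{-}=\emptyset$ simultaneously for the nonlinear process $N$ and for the auxiliary linear Hawkes process $\tilde N$ with empty history and intensity $\tilde\lambda_{t}=\lambda(0)+\alpha\int_{0}^{t}h(t-s)\tilde N(ds)$. A monotone induction on the iterates driven by the \emph{same} Poisson measure $\overline N$ yields $N\subseteq\tilde N$ pathwise, so it suffices to bound $\sup_{t}\mathbb{E}[\exp(\theta\int_{0}^{t}h(t-s)\tilde N(ds))]$.

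For the linear process I would invoke the Hawkes--Oakes cluster representation: on $[0,t]$, $\tilde N$ is a superposition of i.i.d. subcritical Galton--Watson clusters rooted at immigrant times forming a Poisson process of rate $\lambda(0)$, with offspring intensity $\alpha h(\cdot)$ at each node. Writing $W_{u}(C):=\sum_{\xi\in C,\,\xi\leq u}h(u-\xi)$ for the contribution of a cluster $C$ rooted at $0$ observed at offset $u$, and $\phi(u):=\mathbb{E}[e^{\theta W_{u}(C)}]$, the Laplace functional of the immigrant Poisson process gives
\begin{equation*}
\mathbb{E}\Big[\exp\Big(\theta\int_{0}^{t}h(t-s)\tilde N(ds)\Big)\Big]
=\exp\!\Big(\lambda(0)\int_{0}^{t}[\phi(u)-1]\,du\Big),
\end{equation*}
reducing the lemma to proving $\int_{0}^{\infty}[\phi(u)-1]\,du<\infty$. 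The branching decomposition $C=\{0\}\cup\bigcup_{k}(\eta_{k}+C_{k})$ with $\{\eta_{k}\}$ Poisson of intensity $\alpha h(\cdot)$ and $C_{k}$ i.i.d.\ copies of $C$, combined with Campbell's theorem, yields the fixed-point equation
\begin{equation*}
\phi(u)=e^{\theta h(u)}\exp\!\big(\alpha(h*(\phi-1))(u)\big).
\end{equation*}

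The $L^{1}$ closure would then proceed in two steps. First, the pointwise estimate $W_{u}(C)\leq h(0)|C|$ together with the exponential moment of the Borel distribution of the total cluster size $|C|$ (finite for $\theta$ small because $\alpha\|h\|_{L^{1}}<1$) gives a uniform bound $\phi\leq M:=\mathbb{E}[e^{\theta h(0)|C|}]<\infty$ with $M\to 1$ as $\theta\to 0^{+}$, and hence $\exp(\alpha(h*(\phi-1))(u))\leq M$ pointwise. Second, applying $e^{x}-1\leq xe^{x}$ to each exponential factor in the fixed-point equation yields the linear majorization $\psi\leq M_{0}\theta h+MM_{0}\alpha(h*\psi)$ for $\psi:=\phi-1$ and $M_{0}:=e^{\theta h(0)}$; integrating over $[0,T]$ and using Fubini together with $\int(h*\psi)\leq\|h\|_{L^{1}}\int\psi$ then closes the inequality,
\begin{equation*}
\big(1-MM_{0}\alpha\|h\|_{L^{1}}\big)\int_{0}^{T}\psi(u)\,du\leq M_{0}\theta\|h\|_{L^{1}}.
\end{equation*}
Since $M,M_{0}\to 1$ as $\theta\to 0^{+}$ while $\alpha\|h\|_{L^{1}}<1$ by Assumption \ref{Assumption}, the coefficient on the left is strictly positive for all sufficiently small $\theta$, whence $\int_{0}^{\infty}\psi<\infty$ uniformly in $T$ and the lemma follows.

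The main obstacle is this last closure: one must simultaneously control $\phi$ pointwise (to tame the nonlinearity inside the exponential on the right side of the fixed-point equation) and keep the contraction constant $MM_{0}\alpha\|h\|_{L^{1}}$ strictly below one. This is precisely what forces $\theta$ to be chosen small and where the subcriticality $\alpha\|h\|_{L^{1}}<1$ from Assumption \ref{Assumption} does the essential work; the Poisson-embedding domination and the cluster/Laplace computation are relatively mechanical once this framework is in place.
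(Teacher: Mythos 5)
Your argument is correct, but it takes a genuinely different route from the paper's. You first dominate the empty-history nonlinear process pathwise by a linear Hawkes process via a monotone Poisson-embedding coupling (this uses that $\lambda$ is increasing and $\lambda(x)\leq\lambda(0)+\alpha x$), and then treat the linear process through the Hawkes--Oakes cluster representation: the Laplace functional of the immigrant Poisson process reduces the claim to $\int_{0}^{\infty}(\phi(u)-1)\,du<\infty$, which you close via the branching fixed-point equation, the exponential moment of the subcritical Borel total cluster size (giving the a priori bound $\phi\leq M$ with $M\to1$ as $\theta\to0^{+}$, using $h(0)<\infty$ from the monotonicity of $h$), and an $L^{1}$ Gronwall-type estimate whose contraction constant stays below one for small $\theta$ because $\alpha\Vert h\Vert_{L^{1}}<1$; the a priori boundedness of $\psi$ justifies the rearrangement on $[0,T]$. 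The paper instead works directly with the nonlinear process: it uses the exponential martingale $\exp\{\int_{0}^{t}f\,dN-\int_{0}^{t}(e^{f}-1)\lambda_{s}\,ds\}$, H\"{o}lder's inequality to split off the martingale factor, the bound $\lambda_{s}\leq\lambda(0)+\alpha\int_{0}^{s}h(s-u)N(du)$, and Jensen's inequality against the normalized measure $\frac{q}{p}(e^{p\theta h(t-s)}-1)\alpha\,ds$ to obtain a self-bounding inequality for $F(T)=\sup_{0\leq t\leq T}\mathbb{E}^{\emptyset}[e^{\theta\int_{0}^{t}h(t-s)N(ds)}]$, closed by first taking $q>1$ with $q\alpha\Vert h\Vert_{L^{1}}<1$ and then $\theta$ small so that $C(\infty)<\theta$. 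Your route uses the monotonicity of $\lambda$ (which the paper's martingale argument does not need for this lemma, though it is part of Assumption \ref{Assumption} anyway), but it buys a structural by-product --- pathwise domination by an explicit linear Hawkes process --- and explicit constants via the Borel distribution; the paper's argument is shorter and avoids the cluster machinery. Both hinge on the same smallness mechanism: choosing $\theta$ so that the exponential nonlinearity is controlled by a linear contraction with constant governed by $\alpha\Vert h\Vert_{L^{1}}<1$.
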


\begin{proof}
Notice first that for any bounded deterministic function $f(\cdot)$,
\begin{equation}
\exp\left\{\int_{0}^{t}f(s)N(ds)-\int_{0}^{t}(e^{f(s)}-1)\lambda(s)ds\right\}
\end{equation}
is a martingale. Therefore, using the Lipschitz assumption of $\lambda(\cdot)$, 
i.e. $\lambda(z)\leq\lambda(0)+\alpha z$ and applying H\"{o}lder's inequality, for $\frac{1}{p}+\frac{1}{q}=1$, we have
\begin{align}
&\mathbb{E}^{\emptyset}\left[e^{\int_{0}^{t}\theta h(t-s)N(ds)}\right]
\\
&=\mathbb{E}^{\emptyset}
\left[e^{\int_{0}^{t}\theta h(t-s)N(ds)-\frac{1}{p}\int_{0}^{t}(e^{p\theta h(t-s)}-1)\lambda(s)ds
+\frac{1}{p}\int_{0}^{t}(e^{p\theta h(t-s)}-1)\lambda(s)ds}\right]\nonumber
\\
&\leq\mathbb{E}^{\emptyset}\left[e^{\frac{q}{p}\int_{0}^{t}(e^{p\theta h(t-s)}-1)\lambda(s)ds}\right]^{\frac{1}{q}}\nonumber
\\
&\leq\mathbb{E}^{\emptyset}\left[e^{\frac{q}{p}\int_{0}^{t}(e^{p\theta h(t-s)}-1)(\lambda(0)+\alpha\int_{0}^{s}h(s-u)N(du))ds}
\right]^{\frac{1}{q}}\nonumber
\\
&\leq\mathbb{E}^{\emptyset}\left[e^{\int_{0}^{t}\frac{q}{p}(e^{p\theta h(t-s)}-1)\alpha\int_{0}^{s}h(s-u)N(du)ds}\right]^{\frac{1}{q}}
\cdot e^{\frac{1}{p}\int_{0}^{\infty}(e^{p\theta h(s)}-1)\lambda(0)ds}.\nonumber
\end{align}
Let $C(t)=\int_{0}^{t}\frac{q}{p}(e^{p\theta h(t-s)}-1)\alpha ds$. Then, for any $t\in[0,T]$,
\begin{align}\label{Jensen}
&\mathbb{E}^{\emptyset}\left[e^{\int_{0}^{t}\frac{q}{p}(e^{p\theta h(t-s)}-1)\alpha\int_{0}^{s}h(s-u)N(du)ds}\right]
\\
&=\mathbb{E}^{\emptyset}\left[e^{\frac{1}{C(t)}\int_{0}^{t}\frac{q}{p}(e^{p\theta h(t-s)}-1)\alpha C(t)\int_{0}^{s}h(s-u)N(du)ds}\right]
\nonumber
\\
&\leq\mathbb{E}^{\emptyset}\left[\frac{1}{C(t)}\int_{0}^{t}\frac{q}{p}(e^{p\theta h(t-s)}-1)\alpha
e^{C(t)\int_{0}^{s}h(s-u)N(du)}ds\right]\nonumber
\\
&\leq\sup_{0\leq s\leq T}\mathbb{E}^{\emptyset}\left[e^{C(\infty)\int_{0}^{s}h(s-u)N(du)}\right],\nonumber
\end{align}
where in the first inequality in \eqref{Jensen}, we used the Jensen's inequality since $x\mapsto e^{x}$ is convex
and $\frac{1}{C(t)}\int_{0}^{t}\frac{q}{p}(e^{p\theta h(t-s)}-1)\alpha ds=1$, and in the second inequality
in \eqref{Jensen}, we used the fact that $C(t)\leq C(\infty)$ 
and again $\frac{1}{C(t)}\int_{0}^{t}\frac{q}{p}(e^{p\theta h(t-s)}-1)\alpha ds=1$.
Now choose $q>1$ so small that $q\alpha\Vert h\Vert_{L^{1}}<1$. Once $p$ and $q$ are fixed, choose so $\theta>0$ small that
\begin{equation}
C(\infty)=\int_{0}^{\infty}\frac{q}{p}(e^{p\theta h(s)}-1)\alpha ds<\theta.
\end{equation}
This implies that for any $t\in[0,T]$,
\begin{equation}
\mathbb{E}^{\emptyset}\left[e^{\int_{0}^{t}\theta h(t-s)N(ds)}\right]
\leq\sup_{0\leq s\leq T}\mathbb{E}^{\emptyset}\left[e^{\theta\int_{0}^{s}h(s-u)N(du)}\right]^{\frac{1}{q}}
\cdot e^{\frac{1}{p}\int_{0}^{\infty}(e^{p\theta h(s)}-1)\lambda(0)ds}.
\end{equation}
Hence, we conclude that for any $T>0$,
\begin{equation}
\sup_{0\leq t\leq T}\mathbb{E}^{\emptyset}\left[e^{\theta\int_{0}^{t}h(t-s)N(ds)}\right]
\leq e^{\int_{0}^{\infty}(e^{p\theta h(s)}-1)\lambda(0)ds}<\infty.
\end{equation}
\end{proof}

\begin{lemma}\label{secondmoment}
There exists some $\theta>0$ such that $\mathbb{E}[e^{\theta N[0,1]}]<\infty$. Hence $\mathbb{E}[N[0,1]^{2}]<\infty$.
\end{lemma}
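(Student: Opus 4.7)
The plan is to combine Lemma \ref{midstep} with a passage to the stationary regime. If $h\equiv 0$, the process is a homogeneous Poisson process of rate $\lambda(0)$, which has all exponential moments, so I may assume there exists some $a>0$ with $h(a)>0$.

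First, I extract a count from the integral bounded by Lemma \ref{midstep}. Because $h$ is decreasing, for $s\in[T,T+a]$ one has $h(T+a-s)\geq h(a)$, so
\[
\int_0^{T+a}\theta h(T+a-s)\,N(ds)\;\geq\;\theta h(a)\,N[T,T+a].
\]
Applying Lemma \ref{midstep} at $t=T+a$ therefore yields
\[
\sup_{T\geq 0}\mathbb{E}^{\emptyset}\!\left[e^{\theta h(a)N[T,T+a]}\right]\;<\;\infty.
\]

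Next, I pass to the stationary law. By the Br\'emaud--Massouli\'e stability result invoked in the introduction, the law of the empty-history Hawkes process restricted to $[T,T+a]$ converges in total variation, as $T\to\infty$, to that of $\{N(s):s\in[0,a]\}$ under the stationary measure. Since $x\mapsto e^{\theta h(a)x}$ is continuous and nonnegative, the Portmanteau theorem (or Fatou's lemma after bounded truncation and monotone convergence) gives
\[
\mathbb{E}\!\left[e^{\theta h(a)N[0,a]}\right]\;\leq\;\liminf_{T\to\infty}\mathbb{E}^{\emptyset}\!\left[e^{\theta h(a)N[T,T+a]}\right]\;<\;\infty.
\]
If $a\geq 1$ this is already an exponential moment of $N[0,1]$. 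If $a<1$, let $k:=\lceil 1/a\rceil$, use $N[0,1]\leq\sum_{i=0}^{k-1}N[ia,(i+1)a]$, and apply H\"older together with the stationarity of $N$:
\[
\mathbb{E}\!\left[e^{(\theta h(a)/k)N[0,1]}\right]\;\leq\;\prod_{i=0}^{k-1}\mathbb{E}\!\left[e^{\theta h(a)N[ia,(i+1)a]}\right]^{1/k}=\mathbb{E}\!\left[e^{\theta h(a)N[0,a]}\right]<\infty,
\]
which is the first assertion. The second, $\mathbb{E}[N[0,1]^2]<\infty$, is then immediate from the elementary bound $x^2\leq C_{\theta'}e^{\theta'x}$ for $x\geq 0$ and any $\theta'>0$.

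The main obstacle is justifying the transfer from the empty-history law to the stationary law, which I handle by invoking Br\'emaud and Massouli\'e's stability theorem; everything else is elementary bookkeeping based on monotonicity of $h$ and H\"older's inequality.
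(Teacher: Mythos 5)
Your proposal is correct and follows essentially the same route as the paper: lower-bound $\int_0^t \theta h(t-s)\,N(ds)$ by a multiple of the count in a terminal window using the monotonicity of $h$, invoke Lemma \ref{midstep} for a uniform-in-$t$ exponential moment under the empty-history law, and transfer to the stationary law via the Br\'emaud--Massouli\'e stability theorem and Fatou. The paper simply takes the window $[t-1,t]$ directly with $\delta=\inf_{[0,1]}h>0$ (reading the assumption as $h>0$), whereas you add the harmless extra care of a shorter window plus H\"older in case $h$ vanishes on part of $[0,1]$.
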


\begin{proof}
By Assumption \ref{Assumption}, $h(\cdot)>0$ is positive and decreasing. Thus, $\delta=\inf_{t\in[0,1]}h(t)>0$. Hence,
\begin{equation}
\mathbb{E}^{\emptyset}[e^{\theta N[t-1,t]}]\leq\mathbb{E}^{\emptyset}[e^{\frac{\theta}{\delta}\int_{0}^{t}h(t-s)N(ds)}].
\end{equation}
By Lemma \ref{midstep}, we can choose $\theta>0$ so small that
\begin{equation}
\limsup_{t\rightarrow\infty}\mathbb{E}^{\emptyset}[e^{\theta N[t-1,t]}]<\infty.
\end{equation}
Finally, $\mathbb{E}[e^{\theta N[0,1]}]\leq\liminf_{t\rightarrow\infty}\mathbb{E}^{\emptyset}[e^{\theta N[t-1,t]}]<\infty$.
\end{proof}

It is intuitively clear that $\sigma>0$. But still we need a proof.

\begin{lemma}\label{positivesigma}
$\sigma>0$, where $\sigma$ is defined in \eqref{sigmadefn}.
\end{lemma}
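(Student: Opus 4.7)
My plan is to establish the stronger bound $\sigma^{2}\geq\lambda(0)>0$; note that $\lambda(0)>0$ since $\lambda(\cdot)$ is positive by Assumption~\ref{Assumption}, and that $\lambda_{t}\geq\lambda(0)$ almost surely since $\lambda(\cdot)$ is increasing. By Theorem~\ref{BTheorem}, whose hypotheses were verified in the proof of Theorem~\ref{mainthm}, the series in \eqref{sigmadefn} is absolutely convergent and $\sigma^{2}=\lim_{n\to\infty}\mathrm{Var}(S_{n})/n=\lim_{n\to\infty}\mathrm{Var}(N_{n})/n$. It therefore suffices to prove $\mathrm{Var}(N_{n})\geq\lambda(0)n$ for every $n\geq 1$.

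To do so, I use the Poisson embedding \eqref{canonical} to extract an independent Poisson process of rate $\lambda(0)$ from $N$. Fix the past $\omega^{-}$ and split the planar unit-rate driver as $\overline{N}=\overline{N}_{1}\sqcup\overline{N}_{2}$, where $\overline{N}_{1}:=\overline{N}\cap((0,\infty)\times[0,\lambda(0)))$. Let $\Pi$ be the time projection of $\overline{N}_{1}$. Then $\Pi$ is a Poisson process of rate $\lambda(0)$ independent of $\overline{N}_{2}$ and of $\omega^{-}$; since $\lambda_{t}\geq\lambda(0)$, every atom of $\Pi$ is a point of $N$, so $N^{(2)}:=N-\Pi\geq 0$. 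With $\mathcal{G}:=\sigma(\omega^{-},\overline{N}_{2})$, the law of total variance together with the independence of $\Pi$ from $\mathcal{G}$ yield
\[
\mathrm{Var}(N_{n})\geq\mathbb{E}\bigl[\mathrm{Var}(N_{n}\mid\mathcal{G})\bigr]=\lambda(0)n+2\mathbb{E}\bigl[\mathrm{Cov}(\Pi_{n},N^{(2)}_{n}\mid\mathcal{G})\bigr]+\mathbb{E}\bigl[\mathrm{Var}(N^{(2)}_{n}\mid\mathcal{G})\bigr],
\]
so the claim reduces to showing $\mathrm{Cov}(\Pi_{n},N^{(2)}_{n}\mid\mathcal{G})\geq 0$ almost surely.

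This last inequality follows from the FKG/association property for Poisson point processes, once I verify that, with $\mathcal{G}$ held fixed, both $\Pi_{n}$ and $N^{(2)}_{n}$ are non-decreasing functionals of $\Pi$. The former is immediate. For the latter, a monotone coupling applied along the iteration \eqref{canonical} shows that enlarging $\overline{N}_{1}$ only increases each $\lambda^{k}_{t}$ (by monotonicity of $\lambda(\cdot)$), hence each $N^{k}$, and therefore also the limit $N$; an atom added to $\overline{N}_{1}$ contributes the same new point to both $\Pi$ and $N$, and any further extra points in $N$ come from $\overline{N}_{2}$, so $N^{(2)}=N-\Pi$ is non-decreasing in $\Pi$ as well. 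Association of Poisson measures (obtainable, for instance, via a Bernoulli grid approximation) then yields the non-negative covariance, and combining with the display above gives $\mathrm{Var}(N_{n})\geq\lambda(0)n$, hence $\sigma^{2}\geq\lambda(0)>0$. The main obstacle I anticipate is the careful justification of the monotone coupling through the $k\to\infty$ limit in \eqref{canonical} and the application of FKG to Poisson point processes on a general space; both steps are standard but require attention to measurability.
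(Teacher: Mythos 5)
Your proof is correct, but it takes a genuinely different route from the paper's. The paper uses the Gordin--Heyde martingale decomposition: it sets $\eta_{n}=\sum_{j\geq n}\mathbb{E}[N(j,j+1]-\mu\,|\,\mathcal{F}^{-\infty}_{n+1}]$ (well defined by \eqref{finitesum}), identifies $\sigma^{2}$ with $\mathbb{E}[Y_{1}^{2}]$ for the stationary martingale difference $Y_{1}=\eta_{0}-\eta_{-1}+N(-1,0]-\mu$, and then shows $Y_{1}$ is not a.s.\ zero by exhibiting an explicit event of positive probability (nonempty past, no points in $(0,1]$) on which the two conditional sums differ. That argument is purely qualitative. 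Your route instead extracts, via the Poisson embedding, an independent rate-$\lambda(0)$ Poisson process sitting inside $N$, applies the law of total variance conditioning on $\mathcal{G}=\sigma(\omega^{-},\overline{N}_{2})$, and controls the cross term by the association (FKG) property of Poisson point processes, using that both $\Pi_{n}$ and $N^{(2)}_{n}$ are non-decreasing in the strip process $\overline{N}_{1}$. This buys a quantitative bound $\sigma^{2}\geq\lambda(0)$ (consistent with $\sigma^{2}=\nu/(1-\Vert h\Vert_{L^{1}})^{3}\geq\nu$ in the linear case), at the cost of two external ingredients the paper does not need: the association inequality for Poisson measures, and a monotone-in-the-driver coupling, which is not literally the iteration \eqref{canonical} (that one iterates in the past contribution $\omega^{-}_{1}$, not in the planar driver) but an analogous monotone scheme comparing two drivers $\overline{N}\subset\overline{N}'$, valid because the minimal solution is the unique solution under $\alpha\Vert h\Vert_{L^{1}}<1$. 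The only substantive hypothesis check is that $\lambda(0)>0$, which holds since Assumption \ref{Assumption} requires $\lambda(\cdot)$ to be (strictly) positive; also note that $\sigma^{2}=\lim_{n}\mathrm{Var}(S_{n})/n$ is legitimate because Theorem \ref{BTheorem} gives absolute convergence of the covariance series. With those points attended to, your argument is a valid and in fact sharper alternative.
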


\begin{proof}
Let $\eta_{n}=\sum_{j=n}^{\infty}\mathbb{E}[N(j,j+1]-\mu|\mathcal{F}^{-\infty}_{n+1}]$, where $\mu=\mathbb{E}[N[0,1]]$.
$\eta_{n}$ is well defined because we proved \eqref{finitesum}. To see this, notice that
\begin{align}
\Vert\eta_{n}\Vert_{2}&=\bigg\Vert\sum_{j=n}^{\infty}\mathbb{E}[N(j,j+1]-\mu|\mathcal{F}^{-\infty}_{n+1}]\bigg\Vert_{2}
\\
&\leq\sum_{j=n}^{\infty}\Vert\mathbb{E}[N(j,j+1]-\mu|\mathcal{F}^{-\infty}_{n+1}]\Vert_{2}<\infty,\nonumber
\end{align}
by \eqref{finitesum}. Also, it is easy to check that
\begin{align}
&\mathbb{E}[\eta_{n+1}-\eta_{n}+N(n,n+1]-\mu|\mathcal{F}^{-\infty}_{n+1}]
\\
&=\mathbb{E}\left[\sum_{j=n+1}^{\infty}\mathbb{E}[N(j,j+1]-\mu|\mathcal{F}^{-\infty}_{n+2}]\bigg|\mathcal{F}^{-\infty}_{n+1}\right]\nonumber
\\
&-\mathbb{E}\left[\sum_{j=n}^{\infty}\mathbb{E}[N(j,j+1]-\mu|\mathcal{F}^{-\infty}_{n+1}]\bigg|\mathcal{F}^{-\infty}_{n+1}\right]+N(n,n+1]-\mu\nonumber
\\
&=\sum_{j=n+1}^{\infty}\mathbb{E}[N(j,j+1]-\mu|\mathcal{F}^{-\infty}_{n+1}]
-\sum_{j=n+1}^{\infty}\mathbb{E}[N(j,j+1]-\mu|\mathcal{F}^{-\infty}_{n+1}]\nonumber
\\
&-N(n,n+1]+\mu+N(n,n+1]-\mu=0.\nonumber
\end{align}
Let $Y_{n}=\eta_{n-1}-\eta_{n-2}+N(n-2,n-1]-\mu$. Then, $Y_{n}$ is an ergodic, stationary sequence such that 
$\mathbb{E}[Y_{n}|\mathcal{F}^{-\infty}_{n-1}]=0$. By \eqref{finitesum}, $\mathbb{E}[Y_{n}^{2}]<\infty$ and 
by Theorem \ref{BTheoremII}, $S'_{[n\cdot]}/\sqrt{n}\rightarrow\sigma'B(\cdot)$, where $S'_{n}=\sum_{j=1}^{n}Y_{j}$.
It is clear that $\sigma=\sigma'<\infty$ 
since for any $\epsilon>0$,
\begin{align}
&\mathbb{P}\left(\max_{1\leq k\leq [n],k\in\mathbb{Z}}\frac{1}{\sqrt{n}}\sum_{j=1}^{k}(\eta_{j-1}-\eta_{j-2})>\epsilon\right)
\\
&=\mathbb{P}\left(\max_{1\leq k\leq [n],k\in\mathbb{Z}}(\eta_{k-1}-\eta_{-1})>\epsilon\sqrt{n}\right)\nonumber
\\
&\leq\mathbb{P}\left(\left\{\max_{1\leq k\leq [n],k\in\mathbb{Z}}|\eta_{k-1}|>\frac{\epsilon\sqrt{n}}{2}\right\}
\bigcup\left\{|\eta_{-1}|>\frac{\epsilon\sqrt{n}}{2}\right\}\right)\nonumber
\\
&\leq\sum_{k=1}^{[n]}\mathbb{P}\left(|\eta_{k-1}|>\frac{\epsilon\sqrt{n}}{2}\right)
+\mathbb{P}\left(|\eta_{-1}|>\frac{\epsilon\sqrt{n}}{2}\right)\nonumber
\\
&=([n]+1)\mathbb{P}\left(|\eta_{-1}|>\frac{\epsilon\sqrt{n}}{2}\right)\nonumber
\\
&\leq\frac{4([n]+1)}{\epsilon^{2}n}\int_{|\eta_{-1}|>\frac{\epsilon\sqrt{n}}{2}}|\eta_{-1}|^{2}d\mathbb{P}\rightarrow 0,\nonumber
\end{align}
as $n\rightarrow\infty$, where we used the stationarity of $\mathbb{P}$, Chebychev's inequality and \eqref{finitesum}.

Now, it becomes clear that
\begin{align}
\sigma^{2}&=(\sigma')^{2}=\mathbb{E}[Y_{1}^{2}]
\\
&=\mathbb{E}\left(\eta_{0}-\eta_{-1}+N(-1,0]-\mu\right)^{2}\nonumber
\\
&=\mathbb{E}\left(\sum_{j=0}^{\infty}\mathbb{E}[N(j,j+1]-\mu|\mathcal{F}^{-\infty}_{1}]
-\sum_{j=0}^{\infty}\mathbb{E}[N(j,j+1]-\mu|\mathcal{F}^{-\infty}_{0}]\right)^{2}.\nonumber
\end{align}
Consider $D=\{\omega:\omega^{-}\neq\emptyset, \omega(0,1]=\emptyset\}$. Notice that
$\mathbb{P}(\omega^{-}=\emptyset)=0$. By Jensen's inequality and Assumption \ref{Assumption}, we have
\begin{align}
\mathbb{P}(D)&=\int\mathbb{P}^{\omega^{-}}(N(0,1]=0)\mathbb{P}(d\omega^{-})
\\
&=\mathbb{E}\left[e^{-\int_{0}^{1}\lambda(\sum_{\tau\in\omega^{-}}h(t-\tau))dt}\right]\nonumber
\\
&\geq\exp\left\{-\mathbb{E}\int_{0}^{1}\lambda\left(\sum_{\tau\in\omega^{-}}h(t-\tau)\right)dt\right\}\nonumber
\\
&\geq\exp\left\{-\lambda(0)-\alpha\mathbb{E}\int_{0}^{1}\sum_{\tau\in\omega^{-}}h(t-\tau)dt\right\}\nonumber
\\
&\geq\exp\left\{-\lambda(0)-\alpha\mathbb{E}[N[0,1]]\cdot\Vert h\Vert_{L^{1}}\right\}>0.\nonumber
\end{align}
It is clear that given the event $D$, 
\begin{equation}
\sum_{j=0}^{\infty}\mathbb{E}[N(j,j+1]-\mu|\mathcal{F}^{-\infty}_{1}]
<\sum_{j=0}^{\infty}\mathbb{E}[N(j,j+1]-\mu|\mathcal{F}^{-\infty}_{0}]. 
\end{equation}
Therefore,
\begin{equation}
\mathbb{P}\left(\sum_{j=0}^{\infty}\mathbb{E}[N(j,j+1]-\mu|\mathcal{F}^{-\infty}_{1}]
\neq\sum_{j=0}^{\infty}\mathbb{E}[N(j,j+1]-\mu|\mathcal{F}^{-\infty}_{0}]\right)>0,
\end{equation}
which implies that $\sigma>0$.
\end{proof}

\begin{proof}[Proof of Theorem \ref{LIL}]
By Heyde and Scott \cite{Heyde}, the Strassen's invariance principle holds if we have \eqref{finitesum} and $\sigma>0$.
\end{proof}

\section*{Acknowledgements}

The author is very grateful to his advisor Professor S. R. S. Varadhan for helpful discussions and generous suggestions.  
The author also wishes to thank an annonymous referee for very careful readings of the manuscript and helpful suggestions
that greatly improved the paper.
The author is supported by NSF grant DMS-0904701, DARPA grant and MacCracken Fellowship at NYU.

\end{document}